\documentclass[12pt,a4paper]{article}
\usepackage{amssymb}
\usepackage{amsmath}
\usepackage{amsthm}
\usepackage{ot1patch}
\usepackage{indentfirst}

\newtheorem{tw}{Theorem}[section]

\newtheorem{lm}[tw]{Lemma}
\newtheorem{cor}[tw]{Corollary}

\newtheorem{ex}[tw]{Example}
\newtheorem{pyt}[tw]{Question}

\DeclareMathOperator{\trdeg}{tr\,deg}

\DeclareMathOperator{\jac}{Jac}
\DeclareMathOperator{\dgcd}{dgcd}

\newcommand{\arstd}{\renewcommand{\arraystretch}{1.7}}
\newcommand{\arstz}{\renewcommand{\arraystretch}{1}}

\author{Piotr J\k{e}drzejewicz\\
\normalsize Faculty of Mathematics and Computer Science\\
\normalsize Nicolaus Copernicus University\\
\normalsize Toru\'{n}, Poland}
\title{A characterization of Keller maps}

\begin{document}

\maketitle

\begin{abstract}
Let $k$ be a field of characteristic zero.
Let $\varphi$ be a $k$-en\-do\-morph\-ism
of the polynomial algebra $k[x_1,\dots,x_n]$.
It is known that $\varphi$
is an automorphism if and only if
it maps irreducible polynomials to irreducible polynomials.
In this paper we show that $\varphi$
satisfies the jacobian condition if and only if
it maps irreducible polynomials to square-free polynomials.
Therefore, the Jacobian Conjecture
is equivalent to the following statement:
every $k$-en\-do\-morph\-ism of $k[x_1,\dots,x_n]$,
mapping irreducible polynomials to square-free polynomials,
maps irreducible polynomials to irreducible polynomials.
\end{abstract}

\begin{table}[b]\footnotesize\hrule\vspace{1mm}
Keywords: jacobian conjecture, Keller map, jacobian determinant.\\
2010 Mathematics Subject Classification:
Primary 13F20, Secondary 14R15, 13N15.
\end{table}

\section{Introduction}

Throughout this article $k$ is a field of characteristic zero.
By $k[x_1,\dots,x_n]$ we denote the $k$-algebra
of polynomials in $n$ variables.
If $\varphi$ is a $k$-en\-do\-morph\-ism of $k[x_1,\dots,x_n]$,
then by $\jac \varphi$ we denote the jacobian determinant
of the polynomials $\varphi(x_1)$, $\dots$, $\varphi(x_n)$
with respect to the variables $x_1$, $\dots$, $x_n$.
If $\jac \varphi\in k\setminus\{0\}$,
then we say that $\varphi$ satisfies the jacobian condition.
In this case the respective polynomial map $F\colon k^n\to k^n$,
$F(x_1,\dots,x_n)=(\varphi(x_1),\dots,\varphi(x_n))$,
is called a Keller map.
The famous Jacobian Conjecture, stated by Keller in \cite{Keller},
asserts that every $k$-endomorphism of $k[x_1,\dots,x_n]$
satisfying the jacobian condition
is an automorphism of $k[x_1,\dots,x_n]$.

\medskip

Van den Essen and Shpilrain asked in~\cite{ES}, Problem~1,
if every $\mathbb{C}$-endo\-morph\-ism 
of $\mathbb{C}[x_1,\dots,x_n]$ mapping 
variables to variables is an automorphism
(recall that by a variable we mean an element 
of any set of $n$ generators).
The affirmative answer was given by Jelonek 
in~\cite{Jelonek}, Theorem~2. 
It was noted that this fact holds for arbitrary 
algebraically closed field of characteristic zero
(\cite{Essen}, a comment to Theorem 10.5.9 on p.~273).
Another characterization of polynomial automorphisms 
was obtained by Bakalarski in \cite{Bakalarski}, Theorem~3.7
(see also a remark at the end of \cite{Bakalarski}).
He showed that a $\mathbb{C}$-endomorphism 
of $\mathbb{C}[x_1,\dots,x_n]$ mapping 
irreducible polynomials to irreducible polynomials
is an automorphism.
We present another proof of Bakalarski's theorem
for an arbitrary field $k$ of characteristic zero
(Theorem \ref{t4}).

\medskip

The aim of this paper is to obtain a characterization 
of polynomial endomorphisms satisfying the jacobian condition
as those mapping irreducible polynomials 
to square-free polynomials (Theorem \ref{t3}).
Hence, using the result of Bakalarski,
we have the following equivalent formulation
of the Jacobian Conjecture:
every $k$-endomorphism of $k[x_1,\dots,x_n]$
mapping irreducible polynomials to square-free polynomials
maps irreducible polynomials to irreducible polynomials
(Theorem \ref{t5}).
Our characterization of $k$-endomorphisms
satisfying the jacobian condition
is an immediate consequence of Theorem \ref{t2},
where we prove that an irreducible polynomial $g$
divides the jacobian of given polynomials $f_1$, $\dots$, $f_n$
if and only if there exists
an irreducible polynomial $w\in k[x_1,\dots,x_n]$
such that $g^2$ divides $w(f_1,\dots,f_n)$.

\medskip

Basic definitions and facts are presented
in Section \ref{preliminaries}.
In Section \ref{lemmas} we prove preparatory lemmas,
which will be useful in the proof of Theorem \ref{t2},
given in Section \ref{main}.
In Section \ref{formulation} we obtain a characterization
of endomorphisms satisfying the jacobian condition
and the equivalent formulation of the Jacobian Conjecture.
Some conclusions and comments are presented
in Section \ref{remarks}.

\section{Preliminaries}

\label{preliminaries}

By a ring we mean a commutative ring with unity.
Let $A$ be a ring.
An additive map $d\colon A\to A$ such that
$d(ab)=d(a)b+ad(b)$ for $a,b\in A$
is called a derivation of $A$.
The set $A^d=\{a\in A;\;d(a)=0\}$
is called the ring of constants of $d$.
If $K$ is a subring of $A$,
then a derivation $d$ of $A$ is $K$-linear
if and only if $K\subset A^d$.
In this case we call $d$ a $K$-derivation.
Hence, if $A$ is a $k$-algebra, where $k$ is a field,
and $a_1,\dots,a_m\in A$, then a map $d\colon A\to A$
is a $k[a_1,\dots,a_m]$-derivation if and only if
$d$ is a $k$-derivation and $d(a_i)=0$ for $i=1,\dots,m$.

\medskip

If $d$ is a $k$-derivation of a $k$-algebra $A$,
where $k$ is a field,
then for an element $a\in A$ and a polynomial $w(x)\in k[x]$
we have $d(w(a))=w'(a)d(a)$.
More generally, for $a_1,\dots,a_m\in A$
and a polynomial $w(x_1,\dots,x_m)\in k[x_1,$ $\dots,x_m]$
the following holds:
$$d(w(a_1,\dots,a_m))=
\frac{\partial w}{\partial x_1}(a_1,\dots,a_m)d(a_1)+\ldots+
\frac{\partial w}{\partial x_m}(a_1,\dots,a_m)d(a_m).$$
In particular, if $d$ is a $k$-derivation of $k[x_1,\dots,x_n]$,
then $d(f)=\frac{\partial f}{\partial x_1}d(x_1)+\ldots+
\frac{\partial f}{\partial x_n}d(x_n)$
for every polynomial $f\in k[x_1,\dots,x_n]$.

\medskip

Let $k$ be a field of characteristic zero,
let $A$ be a finitely generated $k$-domain
(that is, a commutative $k$-algebra with unity,
without zero divisors)
and let $R$ be a $k$-subalgebra of $A$.
Denote by $R_0$ the field of fractions of $R$.
Nowicki (\cite{Now}, Theorem~5.4; \cite{Polder}, Theorem~4.1.4)
proved that the following conditions are equivalent:

\smallskip

\noindent
$(1)$ $R=A^d$ for some $k$-derivation $d$ of $A$,

\smallskip

\noindent
$(2)$ $R$ is integrally closed in $A$ and $R_0\cap A=R$.

\medskip

Daigle observed in "Locally nilpotent derivations"
(unpublished lecture notes, available on his website)
that the condition $(2)$ means that 
$R$ is algebraically closed in $A$ as a subring.
The present author noted in \cite{rings} that this characterization
holds also for $K$-derivations, where $K$ is a subring of $A$.
In this case we have the following corollary from
\cite{rings}, Theorem 3.1.

\begin{cor}
\label{c2}
Let $A$ be a finitely generated $K$-domain
of characteristic zero, where $K$ is a subring of $A$.
An element $b\in A$ belongs to
the ring of constants of every $K$-derivation of $A$
if and only if $b$ is algebraic over the field $K_0$.
\end{cor}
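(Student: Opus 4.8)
The plan is to establish the stronger fact that the set $R$ of all $b\in A$ lying in the ring of constants of every $K$-derivation of $A$ — equivalently $R=\bigcap_d A^d$, the intersection ranging over all $K$-derivations $d$ of $A$ — equals the algebraic closure $B$ of $K$ in $A$, that is, $B=\{b\in A;\ b\text{ is algebraic over }K_0\}$. The claimed equivalence is then exactly this identity $R=B$, read off one element at a time.

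For the inclusion $B\subseteq R$, fix a $K$-derivation $d$. By the implication $(1)\Rightarrow(2)$ of the cited theorem together with Daigle's reformulation, $A^d$ is algebraically closed in $A$ as a subring, and since it contains $K$ it therefore contains $B$. (One can also see this directly: given $b\in B$, take a nonzero relation $\sum_{i=0}^m a_ib^i=0$ with all $a_i\in K$ and $m$ minimal; applying $d$ and using $K$-linearity yields $\bigl(\sum_{i=1}^m ia_ib^{i-1}\bigr)d(b)=0$, and since $\cha k=0$ the bracketed element is nonzero by minimality of $m$, so $d(b)=0$ because $A$ is a domain.) As $d$ was arbitrary, $B\subseteq R$.

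For the reverse inclusion $R\subseteq B$, I would apply the substantive implication $(2)\Rightarrow(1)$ of Theorem~3.1 of \cite{rings} to the subalgebra $B$ itself. First one checks that $B$ satisfies condition $(2)$: it is a $K$-subalgebra of $A$, its fraction field $B_0$ is an algebraic extension of $K_0$, it is integrally closed in $A$ since an element of $A$ integral over $B$ is algebraic over $B_0$ and hence over $K_0$, hence lies in $B$, and $B_0\cap A=B$ for the same reason. Therefore $B=A^d$ for some $K$-derivation $d$ of $A$. If $b\in A$ is transcendental over $K_0$, then $b\notin B=A^d$, so $d(b)\neq0$ and $b\notin R$; thus $R\subseteq B$, and the proof is complete.

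I do not expect a serious obstacle: the whole weight of the argument is carried by Theorem~3.1 of \cite{rings}, which the excerpt allows us to assume. The only points needing attention are the verification that the algebraic closure $B$ of $K$ in $A$ satisfies condition $(2)$ — which uses nothing beyond transitivity of algebraic field extensions — and the observation that Theorem~3.1 imposes a finiteness hypothesis only on $A$, a finitely generated $K$-domain as assumed here, and nothing on the subalgebra appearing as "$R$" there, so that $B$ is an admissible choice.
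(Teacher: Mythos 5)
Your argument is correct and follows exactly the route the paper intends: the corollary is stated as an immediate consequence of Theorem~3.1 of \cite{rings}, and your proof simply applies that theorem to the algebraic closure $B$ of $K_0$ in $A$ (for $R\subseteq B$) together with the standard characteristic-zero fact that a derivation annihilates elements algebraic over its ring of constants (for $B\subseteq R$). The routine verifications you supply --- that $B$ is a $K$-subalgebra satisfying condition $(2)$, and that the minimality argument needs $\cha k=0$ --- are precisely the details the paper leaves implicit.
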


Given $n$ polynomials $f_1,\dots,f_n\in k[x_1,\dots,x_n]$,
by $\jac(f_1,\dots,f_n)$ we denote the jacobian determinant
of $f_1,\dots,f_n$ with respect to $x_1,\dots,x_n$.
Note that the map $d(f)=\jac(f_1,\dots,f_{n-1},f)$
for $f\in k[x_1,\dots,x_n]$ 
is a $k$-derivation of $k[x_1,\dots,x_n]$,
such that $d(f_i)=0$ for $i=1,\dots,n-1$.
More generally, given $m$ polynomials
$f_1,\dots,f_m\in k[x_1,\dots,x_n]$,
where $1\leqslant m\leqslant n$,
and given arbitrary $j_1,\dots,j_m\in\{1,\dots,n\}$,
by $\jac^{f_1,\dots,f_m}_{j_1,\dots,j_m}$
we denote the jacobian determinant of $f_1,\dots,f_m$
with respect to $x_{j_1},\dots,x_{j_m}$.
The map $d(f)=
\jac^{f_1,\dots,f_{i-1},f,f_{i+1},\dots,f_m}_{j_1,\dots,j_m}$
for $f\in k[x_1,\dots,x_n]$ 
is also a $k$-derivation of $k[x_1,\dots,x_n]$,
and we have $d(f_j)=0$ for $j\neq i$.

\medskip

Following \cite{jaccond}, we introduce the notion
of a differential gcd of polynomials:
$$\dgcd(f_1,\dots,f_m)=
\gcd\left(\jac^{f_1,\dots,f_m}_{j_1,\dots,j_m};\;
1\leqslant j_1,\dots,j_m\leqslant n \right)$$
for $f_1,\dots,f_m\in k[x_1,\dots,x_n]$.
Of course, $\dgcd$ is defined with respect to a scalar multiple.
For a single polynomial $f\in k[x_1,\dots,x_n]$ we have
$\dgcd(f)=c\cdot\gcd\left(\frac{\partial f}{\partial x_1},\dots,
\frac{\partial f}{\partial x_n}\right)$,
where $c\in k\setminus \{0\}$.
For $n$ polynomials $f_1,\dots,f_n\in k[x_1,\dots,x_n]$
we have $\dgcd(f_1,\dots,f_n)=c\cdot\jac(f_1,\dots,f_n)$,
where $c\in k\setminus \{0\}$.

\section{Preparatory lemmas}

\label{lemmas}

Recall that $k$ is a field of characteristic zero.
In Lemmas \ref{l1} -- \ref{l3} below we consider:
arbitrary polynomials $f_1,\dots,f_n\in k[x_1,\dots,x_n]$,
an irreducible polynomial $g\in k[x_1,\dots,x_n]$
and the factor algebra $A=k[x_1,\dots,x_n]/(g)$.
By $\overline{f}$ we denote the respective class in $A$
of a polynomial $f\in k[x_1,\dots,x_n]$, that is,
$\overline{f}=f+(g)$.

\begin{lm}
\label{l1}
For a given $i\in\{1,\dots,n\}$ consider the following condition:
$$\begin{array}{l}
\mbox{there exist $s_1,\dots,s_n\in k[x_1,\dots,x_n]$,
where $g\nmid s_i$, such that}\\
\mbox{$g\mid s_1d(f_1)+\ldots+s_nd(f_n)$
for every $k$-derivation $d$ of $k[x_1,\dots,x_n]$.}
\end{array}\leqno (\ast)$$

\medskip

\noindent
{\bf a)}
The jacobian determinant $\jac(f_1,\dots,f_n)$
is divisible by $g$ if and only if
the condition $(\ast)$ holds for some $i\in\{1,\dots,n\}$.

\medskip

\noindent
{\bf b)}
If, for a given $i\in\{1,\dots,n\}$,
the condition $(\ast)$ holds,
then $\overline{f_i}$ is algebraic over the field
$k(\,\overline{f_1},\dots,\overline{f_{i-1}},
\overline{f_{i+1}},\dots,\overline{f_n}\,)$.
\end{lm}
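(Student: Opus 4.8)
The plan is to convert the quantifier ``for every $k$-derivation $d$'' appearing in $(\ast)$ into a finite, linear-algebraic condition over the domain $A$, and then to combine it with Corollary \ref{c2}. First I would record the identity
$$
\sum_{l=1}^n s_l\,d(f_l)=\sum_{j=1}^n d(x_j)\sum_{l=1}^n s_l\,\frac{\partial f_l}{\partial x_j}\,,
$$
valid for any polynomials $s_1,\dots,s_n$ and any $k$-derivation $d$ of $k[x_1,\dots,x_n]$; it shows that $g$ divides $\sum_l s_l\,d(f_l)$ for \emph{every} $k$-derivation $d$ if and only if $g$ divides $\sum_l s_l\,\partial f_l/\partial x_j$ for every $j\in\{1,\dots,n\}$ (the nontrivial direction by taking $d=\partial/\partial x_j$). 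Let $J$ be the $n\times n$ matrix over $A$ with $(j,l)$ entry $\overline{\partial f_l/\partial x_j}$, so that $\det J=\overline{\jac(f_1,\dots,f_n)}$. Then, for a fixed $i$ and after choosing polynomial representatives, condition $(\ast)$ is equivalent to the existence of a vector $\overline{s}=(\overline{s_1},\dots,\overline{s_n})^{\mathrm{T}}\in A^n$ with $J\overline{s}=0$ and $\overline{s_i}\neq 0$.

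For part a), recall that $g$ is irreducible in the UFD $k[x_1,\dots,x_n]$, hence $A$ is a domain; let $L$ be its field of fractions. If $g\mid\jac(f_1,\dots,f_n)$, then $\det J=0$ in $L$, so $J$ is singular over $L$ and admits a nonzero kernel vector in $L^n$; clearing denominators produces a nonzero $\overline{s}\in A^n$ with $J\overline{s}=0$, and at least one coordinate $\overline{s_i}$ is nonzero, so $(\ast)$ holds for that $i$. Conversely, if $(\ast)$ holds for some $i$, then $J$ has a nonzero kernel vector over $L$, whence $\det J=0$, that is, $g\mid\jac(f_1,\dots,f_n)$.

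For part b), the key point is that every $k$-derivation $D$ of $A$ lifts to a $k$-derivation $d$ of $k[x_1,\dots,x_n]$. Indeed, choose $d(x_j)\in k[x_1,\dots,x_n]$ with $\overline{d(x_j)}=D(\overline{x_j})$ and let $d$ be the resulting $k$-derivation of $k[x_1,\dots,x_n]$; then $\overline{d(g)}=\sum_j\overline{\partial g/\partial x_j}\cdot D(\overline{x_j})=D(\overline{g})=D(0)=0$, so $d\bigl((g)\bigr)\subset(g)$, and the induced derivation of $A$ agrees with $D$ on the generators $\overline{x_j}$, hence equals $D$. Now assume $(\ast)$ for a fixed $i$ and set $K=k[\overline{f_1},\dots,\overline{f_{i-1}},\overline{f_{i+1}},\dots,\overline{f_n}]\subset A$. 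Let $D$ be any $K$-derivation of $A$, that is, a $k$-derivation with $D(\overline{f_l})=0$ for $l\neq i$; lift it to $d$. Since $g$ divides $\sum_l s_l\,d(f_l)$ and $\overline{d(f_l)}=D(\overline{f_l})$, in $A$ we get
$$
0=\sum_{l=1}^n\overline{s_l}\,D(\overline{f_l})=\overline{s_i}\,D(\overline{f_i})\,,
$$
and $\overline{s_i}\neq 0$ in the domain $A$ forces $D(\overline{f_i})=0$. Thus $\overline{f_i}$ lies in the ring of constants of every $K$-derivation of $A$; since $A$ is a finitely generated $K$-domain of characteristic zero, Corollary \ref{c2} gives that $\overline{f_i}$ is algebraic over $K_0=k(\overline{f_1},\dots,\overline{f_{i-1}},\overline{f_{i+1}},\dots,\overline{f_n})$, as required.

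The main obstacle I anticipate is the lifting step in part b): one must verify that the chosen lift $d$ preserves the ideal $(g)$, which rests on the identity $\sum_j\overline{\partial g/\partial x_j}\cdot D(\overline{x_j})=D(\overline{g})=0$, and this is precisely where the relation ``$g=0$ in $A$'' is used. The remainder is routine linear algebra over $A$ and its field of fractions, the only care being the translation between divisibility by $g$ and vanishing in $A$, together with the reduction to the coordinate derivations $\partial/\partial x_j$ that tames the quantifier in $(\ast)$.
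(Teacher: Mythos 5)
Your proof is correct and follows essentially the same route as the paper: part a) via linear dependence of the reduced Jacobian matrix over the fraction field of $A$ together with the identity $\sum_l s_l\,d(f_l)=\sum_j h_j\,d(x_j)$, and part b) via lifting $K$-derivations of $A$ and invoking Corollary \ref{c2}. The only difference is that you prove the lifting of derivations from $A$ to $k[x_1,\dots,x_n]$ inline, where the paper cites it from an earlier work; your verification of it is correct.
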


\begin{proof}
{\bf a)}
The jacobian determinant $\jac(f_1,\dots,f_n)$
is divisible by $g$ if and only if
the determinant of the matrix
\arstd
$$\left[\begin{array}{cccc}
\overline{\,\frac{\partial f_1}{\partial x_1}\,}&
\overline{\,\frac{\partial f_1}{\partial x_2}\,}&\cdots&
\overline{\,\frac{\partial f_1}{\partial x_n}\,}\\
\overline{\,\frac{\partial f_2}{\partial x_1}\,}&
\overline{\,\frac{\partial f_2}{\partial x_2}\,}&\cdots&
\overline{\,\frac{\partial f_2}{\partial x_n}\,}\\
\vdots&\vdots&&\vdots\\
\overline{\,\frac{\partial f_n}{\partial x_1}\,}&
\overline{\,\frac{\partial f_n}{\partial x_2}\,}&\cdots&
\overline{\,\frac{\partial f_n}{\partial x_n}\,}
\end{array}\right]$$
\arstz
equals $0$ in $A$.
If we consider this matrix over the field $A_0$,
the last condition is equivalent
to the linear dependence over $A_0$
of the rows of this matrix.
This condition can be written with coefficients in $A$:
there exist polynomials
$s_1,\dots,s_n\in k[x_1,\dots,x_n]$,
where $\overline{s_i}\neq\overline{0}$ for some $i$, such that
$$\textstyle\overline{s_1}
\left[\,\overline{\frac{\partial f_1}{\partial x_1}},\dots,
\overline{\frac{\partial f_1}{\partial x_n}}\,\right]+\ldots+
\overline{s_n}
\left[\,\overline{\frac{\partial f_n}{\partial x_1}},\dots,
\overline{\frac{\partial f_n}{\partial x_n}}\,\right]=
\left[\,\overline{0},\ldots,\overline{0}\,\right].$$
The above equality holds if and only if all the polynomials
$$\textstyle h_1=s_1\frac{\partial f_1}{\partial x_1}+\ldots+
s_n\frac{\partial f_n}{\partial x_1},
\hspace{3mm}\dots,\hspace{3mm}
h_n=s_1\frac{\partial f_1}{\partial x_n}+\ldots+
s_n\frac{\partial f_n}{\partial x_n}$$
are divisible by $g$.

\medskip

Now, observe that, for an arbitrary $k$-derivation
$d$ of $k[x_1,\dots,x_n]$, we have
\begin{eqnarray*}
\lefteqn{s_1d(f_1)+\ldots+s_nd(f_n)}\\
&=&\textstyle
s_1\left(\frac{\partial f_1}{\partial x_1}d(x_1)+\ldots+
\frac{\partial f_1}{\partial x_n}d(x_n)\right)+\ldots+
s_n\left(\frac{\partial f_n}{\partial x_1}d(x_1)+\ldots+
\frac{\partial f_n}{\partial x_n}d(x_n)\right)\\
&=&\textstyle
\left(s_1\frac{\partial f_1}{\partial x_1}+\ldots+
s_n\frac{\partial f_n}{\partial x_1}\right)d(x_1)+\ldots+
\left(s_n\frac{\partial f_1}{\partial x_n}+\ldots+
s_n\frac{\partial f_n}{\partial x_n}\right)d(x_n)\\
&=&h_1d(x_1)+\ldots+h_nd(x_n).
\end{eqnarray*}
Hence, if the polynomials $h_1,\dots,h_n$ are divisible by $g$,
then $g\mid s_1d(f_1)+\ldots+s_nd(f_n)$.
On the other hand, if the polynomial
$s_1d(f_1)+\ldots+s_nd(f_n)$ is divisible by $g$
for every $k$-derivation $d$,
then, in particular, for the partial derivatives
$d=\frac{\partial}{\partial x_j}$, $j=1,\dots,n$,
we obtain that the polynomials $h_1,\dots,h_n$
are divisible by $g$.

\medskip

\noindent
{\bf b)}
Assume that the condition $(\ast)$ holds
for some $i\in\{1,\dots,n\}$.
Let $\delta$ be an arbitrary
$k[\,\overline{f_1},\dots,\overline{f_{i-1}},
\overline{f_{i+1}},\dots,\overline{f_n}\,]$-derivation
of the factor algebra $A$,
that is, a $k$-derivation such that
$\delta(\,\overline{f_j}\,)=\overline{0}$
for each $j\in\{1,\dots,i-1,i+1,\dots,n\}$.
Consider a $k$-derivation $d$ of $k[x_1,\dots,x_n]$
such that $\delta (\,\overline{f}\,)=\overline{d(f)}$
for every $f\in k[x_1,\dots,x_n]$
(\cite{charoneel}, Lemma 3.2).
We have
$\overline{d(f_j)}=\delta (\,\overline{f_j}\,)=\overline{0}$,
that is, $g\mid d(f_j)$, for each $j\neq i$.
Hence, the condition $(\ast)$ yields that $g\mid s_id(f_i)$,
so $g\mid d(f_i)$, because $g\nmid s_i$.
This means that
$\delta (\,\overline{f_i}\,)=\overline{d(f_i)}=\overline{0}$.
By Corollary \ref{c2},
since $\delta (\,\overline{f_i}\,)=\overline{0}$
for an arbitrary
$k[\,\overline{f_1},\dots,\overline{f_{i-1}},
\overline{f_{i+1}},\dots,\overline{f_n}\,]$-derivation
$\delta$ of $A$,
$\overline{f_i}$ is algebraic over the field
$k(\,\overline{f_1},\dots,\overline{f_{i-1}},
\overline{f_{i+1}},\dots,\overline{f_n}\,)$.
\end{proof}

Note the following easy observation.

\begin{lm}
\label{l2}
Let $m\in\{1,\dots,n\}$.

\medskip

\noindent
{\bf a)}
The elements $\overline{f_1},\dots,\overline{f_m}\in A$
are algebraically dependent over $k$
if and only if $g\mid w(f_1,\dots,f_m)$
for some nonzero polynomial $w\in k[x_1,\dots,x_m]$.

\medskip

\noindent
{\bf b)}
Let $i\in\{1,\dots,m\}$.
The element $\overline{f_i}\in A$
is algebraic over the field
$k(\,\overline{f_1},\dots,\overline{f_{i-1}},
\overline{f_{i+1}},\dots,\overline{f_n}\,)$
if and only if $g\mid w(f_1,\dots,f_m)$
for some nonzero polynomial $w\in k[x_1,\dots,x_m]$
of positive degree with respect to $x_i$.
\end{lm}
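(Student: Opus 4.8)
Both parts rest on one elementary remark: for a polynomial $p\in k[x_1,\dots,x_n]$ one has $\overline p=\overline 0$ in $A$ precisely when $g\mid p$. Part a) is then just the definition of algebraic dependence read through this remark. By definition, $\overline{f_1},\dots,\overline{f_m}$ are algebraically dependent over $k$ if and only if $w(\overline{f_1},\dots,\overline{f_m})=\overline 0$ for some nonzero $w\in k[x_1,\dots,x_m]$; since $w(\overline{f_1},\dots,\overline{f_m})=\overline{w(f_1,\dots,f_m)}$, this says exactly that $g\mid w(f_1,\dots,f_m)$. Both implications are immediate, so a) requires no real argument.

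For b) I would expand a candidate $w\in k[x_1,\dots,x_m]$ in powers of $x_i$, writing $w=\sum_{j\ge 0}a_j x_i^{j}$ with $a_j\in k[x_\ell:\ell\neq i]$. Then $w(f_1,\dots,f_m)=\sum_j a_j(\dots)f_i^{j}$, where $a_j(\dots)$ denotes the evaluation of $a_j$ at the $f_\ell$ with $\ell\neq i$, and by the remark above the divisibility $g\mid w(f_1,\dots,f_m)$ is equivalent to the equation $\sum_j\overline{a_j(\dots)}\,\overline{f_i}^{\,j}=\overline 0$ holding in $A$, whose coefficients $\overline{a_j(\dots)}$ all lie in the field $K$ over which $\overline{f_i}$ is being tested for algebraicity. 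The requirement $\deg_{x_i}w>0$ says that at least one $a_j$ with $j\ge 1$ is a nonzero polynomial.

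From this, the implication ``$\overline{f_i}$ algebraic over $K$ $\Rightarrow$ such a $w$ exists'' goes by taking the minimal polynomial of $\overline{f_i}$ over $K$ (of degree $\ge 1$), clearing denominators so that its coefficients become polynomial expressions in the $\overline{f_\ell}$, $\ell\neq i$, and lifting those expressions to polynomials $a_j\in k[x_\ell:\ell\neq i]$; the resulting $w=\sum_j a_j x_i^{j}$ is nonzero, has $x_i$-degree equal to the degree of the minimal polynomial (hence positive), and satisfies $g\mid w(f_1,\dots,f_m)$ by construction. For the converse, one starts with a nonzero $w$ of positive $x_i$-degree with $g\mid w(f_1,\dots,f_m)$, chosen with $\deg_{x_i}w$ minimal; expanding and reducing modulo $g$ yields a polynomial relation for $\overline{f_i}$ with coefficients in $K$, and minimality of $\deg_{x_i}w$ forces the leading coefficient $\overline{a_d(\dots)}$ to be nonzero in $A$ --- otherwise $w-a_d x_i^{d}$ would be a relation of the same kind with strictly smaller $x_i$-degree --- so the relation is nontrivial and $\overline{f_i}$ is algebraic over $K$.

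Part a) is the trivial half. The step I expect to require the most care is the converse direction in b): checking that for the minimally chosen $w$ the leading coefficient in $x_i$ really survives modulo $g$ --- equivalently, that the divisibility $g\mid w(f_1,\dots,f_m)$ is not already accounted for by an algebraic relation among the $\overline{f_\ell}$ with $\ell\neq i$ alone, which would leave $\overline{f_i}$ unconstrained. Once that point is settled, b) follows formally from the divisibility-versus-vanishing correspondence, exactly as a) does.
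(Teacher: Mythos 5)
Part a) of your proposal is correct, and so is the forward half of b) (minimal polynomial over $K$, clear denominators, lift the coefficients): both reduce, as you say, to the identity $w(\overline{f_1},\dots,\overline{f_m})=\overline{w(f_1,\dots,f_m)}$. The paper offers no proof here --- it states the lemma as an ``easy observation'' --- so there is no argument of the author's to compare against; but note that the field in the paper's b) is written with $\overline{f_n}$ where your construction produces a witness $w\in k[x_1,\dots,x_m]$ only if $\overline{f_i}$ is algebraic over the subfield generated by the $\overline{f_\ell}$ with $\ell\leqslant m$, $\ell\neq i$. You are implicitly reading that $n$ as a typo for $m$, which is the only reading under which the forward implication can hold.

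The converse half of b) is where the real problem sits, and you identified the delicate point exactly but your fix does not close it. If the leading coefficient satisfies $\overline{a_d(\dots)}=\overline{0}$, then $w-a_dx_i^{\,d}$ is indeed still divisible by $g$ after substitution, but it need not be ``a relation of the same kind'': it may have $x_i$-degree $0$, or be the zero polynomial, so minimality of $\deg_{x_i}w$ among witnesses \emph{of positive $x_i$-degree} is not contradicted. The gap is not merely technical: the implication is false as stated. Take $n=m=2$, $i=2$, $g=x_2$, $f_1=x_2$, $f_2=x_1$, and $w=x_1x_2$. Then $w$ is nonzero of positive degree in $x_2$ and $w(f_1,f_2)=x_2x_1$ is divisible by $g$, yet $\overline{f_2}=\overline{x_1}$ is transcendental over $k(\,\overline{f_1}\,)=k$ in $A\cong k[x_1]$. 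What goes wrong is precisely the degenerate case you feared: $g$ divides every evaluated coefficient $a_j(f_1,\dots,f_{i-1},f_{i+1},\dots,f_m)$, so the reduced relation in $A$ is $0=0$ and constrains $\overline{f_i}$ not at all. A correct statement must exclude this, for instance by assuming in addition that $\overline{f_1},\dots,\overline{f_{i-1}},\overline{f_{i+1}},\dots,\overline{f_m}$ are algebraically independent over $k$ --- then part a), applied to a nonzero coefficient $a_d$, shows $\overline{a_d(\dots)}\neq\overline{0}$ and your argument goes through --- or by asserting only the forward implication.
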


In the case of $n$ polynomials in $n$ variables
we have the following.

\begin{lm}
\label{l3}
There exists an irreducible polynomial
$w\in k[x_1,\dots,x_n]$ such that $g\mid w(f_1,\dots,f_n)$.
\end{lm}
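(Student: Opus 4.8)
The plan is to use the fact that the hypersurface quotient $A=k[x_1,\dots,x_n]/(g)$ has transcendence degree at most $n-1$ over $k$, so the $n$ elements $\overline{f_1},\dots,\overline{f_n}$ cannot be algebraically independent, and then to sharpen the resulting algebraic relation to an irreducible one by unique factorization. First I would note that, since $g$ is irreducible, the ideal $(g)$ is prime, so $A$ is a domain; its fraction field $A_0$ is generated over $k$ by $\overline{x_1},\dots,\overline{x_n}$, which satisfy the nontrivial relation $g(\overline{x_1},\dots,\overline{x_n})=\overline{0}$ (nontrivial because $g\neq 0$ is not a constant). Hence $\trdeg_k A_0\leqslant n-1$, and consequently the $n$ elements $\overline{f_1},\dots,\overline{f_n}\in A$ are algebraically dependent over $k$.

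Next I would invoke Lemma \ref{l2} a) with $m=n$: algebraic dependence of $\overline{f_1},\dots,\overline{f_n}$ over $k$ gives a nonzero polynomial $W\in k[x_1,\dots,x_n]$ with $g\mid W(f_1,\dots,f_n)$. Finally, using that $k[x_1,\dots,x_n]$ is a unique factorization domain, write $W=c\,w_1^{a_1}\cdots w_r^{a_r}$ with $c\in k\setminus\{0\}$ and $w_1,\dots,w_r$ irreducible. Then
$$W(f_1,\dots,f_n)=c\,w_1(f_1,\dots,f_n)^{a_1}\cdots w_r(f_1,\dots,f_n)^{a_r},$$
and since $g$ is prime and divides the left-hand side, it divides $w_j(f_1,\dots,f_n)$ for some $j$. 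Taking $w=w_j$ completes the argument.

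I do not expect a genuine obstacle here: the only point needing a moment's care is the inequality $\trdeg_k A_0\leqslant n-1$, which is immediate from the relation $g(\overline{x_1},\dots,\overline{x_n})=\overline{0}$, and everything else is a direct application of Lemma \ref{l2} together with primality of $g$ in the UFD $k[x_1,\dots,x_n]$.
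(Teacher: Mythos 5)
Your proposal is correct and follows essentially the same route as the paper: deduce that $\overline{f_1},\dots,\overline{f_n}$ are algebraically dependent because $\trdeg_k A_0\leqslant n-1$, apply Lemma \ref{l2}.a to get a nonzero relation $W$, and then pass to an irreducible factor of $W$ using primality of $g$. The only cosmetic difference is that the paper justifies the dimension bound via the height of $(g)$ and the Krull dimension of $A$, while you argue directly from the relation $g(\overline{x_1},\dots,\overline{x_n})=\overline{0}$; both are fine.
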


\begin{proof}
The ideal $(g)$ has height $1$,
so the Krull dimension of $A$ equals $n-1$.
Hence, the elements $\overline{f_1},\dots,\overline{f_n}$
are algebraically dependent over $k$.
Then, by Lemma~\ref{l2}.a,
there exists a nonzero polynomial
$u\in k[x_1,\dots,x_n]$ such that $g\mid u(f_1,\dots,f_n)$.
The polynomial $u$ is obviously non-constant.
Then, for some irreducible factor $w$ of $u$,
the polynomial $w(f_1,\dots,f_n)$ is divisible by~$g$.
\end{proof}

\begin{lm}
\label{l4}
Assume that $n\geqslant 2$ and $0\leqslant r\leqslant n-2$.
Consider polynomials
$u_1\in k[x_1,\dots,x_r,x_{r+1}]\setminus k[x_1,\dots,x_r]$
and $u_2\in k[x_1,\dots,x_r,x_{r+2}]\setminus k[x_1,\dots,x_r]$.
If the degrees of $u_1$ with respect to $x_{r+1}$
and of $u_2$ with respect to $x_{r+2}$ are relatively prime,
then there exist nonzero polynomials $w_1\in k[x_1,\dots,x_r]$,
$w_2\in k[x_1,\dots,x_r,x_{r+1},x_{r+2}]\setminus k[x_1,\dots,x_r]$
such that $w_2$ is irreducible and $u_1+u_2=w_1w_2$.
\end{lm}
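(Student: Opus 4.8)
The plan is to set $R := k[x_1,\dots,x_r]$ with field of fractions $K$, show that $u_1+u_2$ is irreducible in $K[x_{r+1},x_{r+2}]$, and then descend to $k[x_1,\dots,x_r,x_{r+1},x_{r+2}]=R[x_{r+1},x_{r+2}]$ by Gauss's lemma; the $R$-content of $u_1+u_2$ will be $w_1$ and the primitive irreducible part will be $w_2$.

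The core is the irreducibility over $K$. Write $p:=\deg_{x_{r+1}}u_1\geq 1$ and $q:=\deg_{x_{r+2}}u_2\geq 1$, so $\gcd(p,q)=1$ by hypothesis. I would equip $K[x_{r+1},x_{r+2}]$ with the quasi-homogeneous grading assigning $x_{r+1}$ weight $q$, $x_{r+2}$ weight $p$, and weight $0$ to all of $K$ (hence to $R$). A quick look at monomials shows every term of $u_1$ has weight $\leq pq$, with equality only for the leading term $a\,x_{r+1}^p$, and likewise every term of $u_2$ has weight $\leq pq$ with equality only for $b\,x_{r+2}^q$, where $a,b\in R\setminus\{0\}$; so the top weighted-homogeneous part of $u_1+u_2$ is exactly $a\,x_{r+1}^p+b\,x_{r+2}^q$, and in particular no cancellation occurs. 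Since $K[x_{r+1},x_{r+2}]$ is a domain, top parts are multiplicative, and a factor whose top part has weight $0$ must itself be a constant (the weights are positive, so the only weight-$0$ monomial is $1$). Hence a nontrivial factorization of $u_1+u_2$ would produce a factorization of $a\,x_{r+1}^p+b\,x_{r+2}^q$ into two nonconstant weighted-homogeneous polynomials. But $a\,x_{r+1}^p+b\,x_{r+2}^q$ is, up to the unit $a\in K^\ast$, the binomial $x_{r+1}^p+(b/a)\,x_{r+2}^q$, which is irreducible in $K[x_{r+1},x_{r+2}]$: viewing it in $K(x_{r+2})[x_{r+1}]$ and applying the standard irreducibility criterion for $x_{r+1}^p-c$, one only needs $-(b/a)x_{r+2}^q$ to be neither an $\ell$-th power for a prime $\ell\mid p$ nor, when $4\mid p$, of the form $-4h^4$; comparing $x_{r+2}$-adic valuations, either would force $\ell\mid q$ (resp.\ $4\mid q$), contradicting $\gcd(p,q)=1$. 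Therefore $u_1+u_2$ is irreducible in $K[x_{r+1},x_{r+2}]$.

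Finally I would invoke Gauss's lemma. Since $R$ is a UFD, write $u_1+u_2=w_1w_2$ with $w_1:=\mathrm{cont}_R(u_1+u_2)\in R=k[x_1,\dots,x_r]$ and $w_2\in R[x_{r+1},x_{r+2}]$ primitive; being a $K^\ast$-multiple of the irreducible $u_1+u_2$, the polynomial $w_2$ is irreducible over $K$, hence, being primitive, also irreducible in $R[x_{r+1},x_{r+2}]=k[x_1,\dots,x_r,x_{r+1},x_{r+2}]$. Both factors are nonzero, and since $\deg_{x_{r+1}}(u_1+u_2)=p\geq 1$ while $w_1$ does not involve $x_{r+1}$, we get $\deg_{x_{r+1}}w_2=p\geq 1$, so $w_2\notin k[x_1,\dots,x_r]$; this is the required factorization. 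The only non-routine point is the choice of the weights $(q,p)$, which converts the statement into irreducibility of a binomial with coprime exponents and guarantees the leading form does not collapse — that is where I expect the real work to lie, the rest being valuation bookkeeping and Gauss's lemma.
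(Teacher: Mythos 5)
Your proof is correct, and its skeleton matches the paper's: both arguments reduce the lemma to the irreducibility of $u_1+u_2$ over the field $L=k(x_1,\dots,x_r)$ and then peel off the factor lying in $k[x_1,\dots,x_r]$ (the paper does this by sorting the irreducible factors of $u_1+u_2$ according to whether they involve $x_{r+1},x_{r+2}$ and counting how many can survive over $L$; you do it by extracting the $R$-content and applying Gauss's lemma --- these are the same step in different clothing). The genuine difference is in the key irreducibility claim. The paper simply cites Ehrenfeucht's criterion (Corollary 3 to Theorem 21 in Schinzel's book) for the irreducibility of $u_1(x_{r+1})+u_2(x_{r+2})$ over $L$ when the degrees are coprime, whereas you prove exactly the needed instance from scratch: the weighting $(q,p)$ on $(x_{r+1},x_{r+2})$ makes the top form of $u_1+u_2$ equal to $a\,x_{r+1}^p+b\,x_{r+2}^q$ with no collapse, multiplicativity of top forms reduces the problem to this binomial, and the Vahlen--Capelli criterion for $x^p-c$ combined with an $x_{r+2}$-adic valuation count (where coprimality of $p$ and $q$ enters) finishes it. All the small supporting facts you use check out: top forms of nonconstant polynomials are nonconstant because the weights are positive, the binomial is monic in $x_{r+1}$ up to the unit $a$ so irreducibility over $K(x_{r+2})$ descends to $K[x_{r+1},x_{r+2}]$, and $\deg_{x_{r+1}}w_2=p\geqslant 1$ forces $w_2\notin k[x_1,\dots,x_r]$. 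What your route buys is self-containedness (modulo the standard binomial criterion) and a transparent explanation of where coprimality is used; what the citation buys the paper is brevity and access to the full strength of Ehrenfeucht's theorem, which is not needed here.
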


\begin{proof}
Consider a decomposition of $u_1+u_2$
into irreducible factors in $k[x_1,$ $\dots,x_r,x_{r+1},x_{r+2}]$:
$u_1+u_2=v_1\ldots v_sv_{s+1}\ldots v_t$,
where $v_1,\dots,v_s\in k[x_1,\dots,$ $x_r]$
and $v_{s+1},\dots,v_t\not\in k[x_1,\dots,x_r]$,
$0\leqslant s\leqslant t$.
Observe that $s<t$, because $u_1+u_2\not\in k[x_1,\dots,x_r]$.

\medskip

Now, consider the field $L=k(x_1,\dots,x_r)$.
Since the degrees of the polynomials:
$u_1$ in $L[x_{r+1}]$ and $u_2$ in $L[x_{r+2}]$
are positive and relatively prime,
the polynomial $u_1+u_2$ is irreducible
in $L[x_{r+1},x_{r+2}]$,
by Corollary~3 to Theorem~21 in \cite{Schinzel}, p.~94
(see also \cite{Ehrenfeucht}).
Hence $t=s+1$.
Finally, put $w_1=v_1\ldots v_s$ ($w_1=1$ if $s=0$)
and $w_2=v_{s+1}$.
\end{proof}

\begin{lm}
\label{l5}
Let $w\in k[x_1,\dots,x_m]$ be an irreducible polynomial
such that $\frac{\partial w}{\partial x_i}\neq 0$
for some $i\in\{1,\dots,m\}$.
Then there exist polynomials $v_1,v_2\in k[x_1,\dots,x_m]$
and $v\in k[x_1,\dots,x_{i-1},x_{i+1},\dots,x_m]\setminus\{0\}$
such that 
$$\textstyle v_1w+v_2\frac{\partial w}{\partial x_i}=v.$$
\end{lm}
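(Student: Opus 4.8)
The plan is to fix the field $L=k(x_1,\dots,x_{i-1},x_{i+1},\dots,x_m)$, run the Euclidean algorithm on $w$ and $\frac{\partial w}{\partial x_i}$ inside the principal ideal domain $L[x_i]$, and then clear denominators. So the whole statement reduces to a B\'ezout identity together with a coprimality check.

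First I would note that the hypothesis $\frac{\partial w}{\partial x_i}\neq 0$ forces $\deg_{x_i}w\geqslant 1$, so $w$ is a nonconstant polynomial in $x_i$ over $L$ (and in particular $w\notin k[x_1,\dots,x_{i-1},x_{i+1},\dots,x_m]$). By Gauss's lemma, since $w$ is irreducible in $k[x_1,\dots,x_m]=\bigl(k[x_1,\dots,x_{i-1},x_{i+1},\dots,x_m]\bigr)[x_i]$ and is not a nonunit of the coefficient ring $k[x_1,\dots,x_{i-1},x_{i+1},\dots,x_m]$, it is primitive as a polynomial in $x_i$ and remains irreducible in $L[x_i]$. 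Next, because $\cha k=0$, the polynomial $\frac{\partial w}{\partial x_i}$ is nonzero of $x_i$-degree strictly smaller than $\deg_{x_i}w$; an irreducible polynomial cannot divide a nonzero polynomial of smaller degree, so $\gcd\bigl(w,\frac{\partial w}{\partial x_i}\bigr)=1$ in $L[x_i]$. Hence there exist $a,b\in L[x_i]$ with $aw+b\,\frac{\partial w}{\partial x_i}=1$.

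Finally I would choose $v\in k[x_1,\dots,x_{i-1},x_{i+1},\dots,x_m]\setminus\{0\}$ to be a common denominator of all the coefficients of $a$ and of $b$, so that $v_1:=va$ and $v_2:=vb$ lie in $k[x_1,\dots,x_m]$. Multiplying the B\'ezout identity by $v$ yields $v_1w+v_2\,\frac{\partial w}{\partial x_i}=v$, which is exactly the claimed relation.

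The only genuinely delicate step is the passage from irreducibility of $w$ in $k[x_1,\dots,x_m]$ to irreducibility in $L[x_i]$ (this is where Gauss's lemma and the hypothesis on the derivative are both used); once that is secured, coprimality with the derivative, and hence the B\'ezout identity, is automatic in characteristic zero. As an alternative one could take $v$ to be the resultant of $w$ and $\frac{\partial w}{\partial x_i}$ with respect to $x_i$ (a scalar multiple of the discriminant of $w$ in $x_i$), which is a nonzero element of $k[x_1,\dots,x_{i-1},x_{i+1},\dots,x_m]$ by the same separability argument and which is expressed as an explicit $k[x_1,\dots,x_m]$-combination $v_1w+v_2\,\frac{\partial w}{\partial x_i}$ by the standard resultant identity.
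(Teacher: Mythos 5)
Your proof is correct and follows essentially the same route as the paper: pass to $L=k(x_1,\dots,x_{i-1},x_{i+1},\dots,x_m)$, observe that $w$ and $\frac{\partial w}{\partial x_i}$ are coprime in $L[x_i]$, take a B\'ezout identity there, and clear denominators. You merely make explicit two points the paper leaves implicit (the Gauss-lemma step for irreducibility of $w$ in $L[x_i]$ and the degree argument showing $w\nmid\frac{\partial w}{\partial x_i}$), so no further comment is needed.
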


\begin{proof}
Consider the field $L=k(x_1,\dots,x_{i-1},x_{i+1},\dots,x_m)$.
The polynomial $w$ is irreducible in $L[x_i]$,
and the polynomial $\frac{\partial w}{\partial x_i}$
is nonzero, so they are relatively prime in $L[x_i]$.
Hence there exist polynomials $u_1,u_2\in L[x_i]$ such that
$$\textstyle u_1w+u_2\frac{\partial w}{\partial x_i}=1.$$
Let $v$ ($v\in k[x_1,\dots,x_{i-1},x_{i+1},\dots,x_m]$)
be the least common denominator of the coefficients 
of polynomials $u_1$, $u_2$. 
Multiplying the above equality by $v$ 
and denoting $v_1=u_1v$, $v_2=u_2v$ we get the lemma.
\end{proof}

\section{Irreducible factors of jacobians}

\label{main}

\begin{tw}
\label{t2}
Let $k$ be a field of characteristic zero,
let $f_1,\dots,f_n\in k[x_1,$ $\dots,x_n]$ be arbitrary polynomials,
and let $g\in k[x_1,\dots,x_n]$ be an irreducible polynomial.
The following conditions are equivalent:

\medskip

\noindent
$(i)$ \ 
$g$ divides $\jac(f_1,\dots,f_n)$,

\medskip

\noindent
$(ii)$ \ 
$g^2$ divides $w(f_1,\dots,f_n)$ for some irreducible polynomial
$w\in k[x_1,\dots,x_n]$.
\end{tw}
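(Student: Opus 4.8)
The plan is to prove the two implications separately; they turn out to be of rather different difficulty.

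\textbf{$(ii)\Rightarrow(i)$.} Suppose $g^2\mid w(f_1,\dots,f_n)$ with $w$ irreducible, say $w(f_1,\dots,f_n)=g^2h$. By the chain rule of Section~\ref{preliminaries}, for every $k$-derivation $d$ of $k[x_1,\dots,x_n]$ one has
\[
\sum_{j=1}^n\frac{\partial w}{\partial x_j}(f_1,\dots,f_n)\,d(f_j)=d\bigl(w(f_1,\dots,f_n)\bigr)=2g\,d(g)h+g^2d(h),
\]
which is divisible by $g$. If $g\nmid\frac{\partial w}{\partial x_i}(f_1,\dots,f_n)$ for some $i$, then the polynomials $s_j:=\frac{\partial w}{\partial x_j}(f_1,\dots,f_n)$ satisfy condition $(\ast)$ of Lemma~\ref{l1} for that $i$, so $g\mid\jac(f_1,\dots,f_n)$ by Lemma~\ref{l1}.a. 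Otherwise $g\mid\frac{\partial w}{\partial x_j}(f_1,\dots,f_n)$ for all $j$. Since $\cha k=0$ and $w$ is irreducible, $\frac{\partial w}{\partial x_i}\neq 0$ for some $i$, and Lemma~\ref{l5} produces $v\in k[x_1,\dots,x_{i-1},x_{i+1},\dots,x_n]\setminus\{0\}$ with $v=v_1w+v_2\frac{\partial w}{\partial x_i}$; substituting $f_1,\dots,f_n$ gives $g\mid v(f_1,\dots,f_n)$, hence by Lemma~\ref{l2}.a the elements $\overline{f_l}$ ($l\neq i$) are algebraically dependent over $k$, while by Lemma~\ref{l2}.b the element $\overline{f_i}$ is algebraic over $k(\overline{f_l}:l\neq i)$. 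Therefore $\trdeg_k k(\overline{f_1},\dots,\overline{f_n})=\trdeg_k k(\overline{f_l}:l\neq i)\leqslant n-2<n-1=\trdeg_k A_0$, so some $\overline{x_m}$ is transcendental over $k(\overline{f_1},\dots,\overline{f_n})$, and by Corollary~\ref{c2} there is a nonzero $k[\overline{f_1},\dots,\overline{f_n}]$-derivation $\delta$ of $A$. Lifting $\delta$ to a $k$-derivation $d$ of $k[x_1,\dots,x_n]$ (as in the proof of Lemma~\ref{l1}.b) one gets $g\mid d(f_j)$ for all $j$ but $g\nmid d(x_m)$ for some $m$; then, modulo $g$, the matrix $[\overline{\partial f_j/\partial x_m}]$ annihilates the nonzero vector $(\overline{d(x_m)})_m$ over $A_0$, so its determinant $\overline{\jac(f_1,\dots,f_n)}$ vanishes.

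\textbf{$(i)\Rightarrow(ii)$.} Assume $g\mid\jac(f_1,\dots,f_n)$, and for $i=1,\dots,n$ put $d_i(f)=\jac(f_1,\dots,f_{i-1},f,f_{i+1},\dots,f_n)$, so that $d_i(f_j)=0$ for $j\neq i$ and $d_i(f_i)=\jac(f_1,\dots,f_n)$; thus $g\mid d_i(f_j)$ for all $j$. If $g\nmid d_i(g)$ for some $i$ (equivalently, $g$ does not divide $\jac(f_1,\dots,f_{i-1},g,f_{i+1},\dots,f_n)$), choose by Lemma~\ref{l3} any irreducible $w$ with $g\mid w(f_1,\dots,f_n)$; since $d_i\bigl(w(f_1,\dots,f_n)\bigr)=\sum_j\frac{\partial w}{\partial x_j}(f_1,\dots,f_n)d_i(f_j)$ is divisible by $g$, writing $w(f_1,\dots,f_n)=g^eu$ with $g\nmid u$ and using $d_i(g^eu)=d_i(g)u+g\,d_i(u)$ rules out $e=1$, so $g^2\mid w(f_1,\dots,f_n)$. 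If instead $g\mid d_i(g)$ for every $i$, take the index $i_0$ furnished by Lemma~\ref{l1}.a; by Lemma~\ref{l1}.b, $\tau:=\trdeg_k k(\overline{f_1},\dots,\overline{f_n})=\trdeg_k k(\overline{f_j}:j\neq i_0)$. Now $d_{i_0}$ descends to a $k[\overline{f_1},\dots,\overline{f_n}]$-derivation $\delta$ of $A$: if $\delta\neq 0$, Corollary~\ref{c2} forces some $\overline{x_m}$ transcendental over $k(\overline{f_1},\dots,\overline{f_n})$, so $\tau<n-1$; if $\delta=0$, the polynomials $d_{i_0}(x_m)$, which are up to sign the maximal minors of the jacobian matrix of $(f_j:j\neq i_0)$, all vanish modulo $g$, so that matrix has rank $\leqslant n-2$ over $A_0$ and hence $\tau\leqslant n-2$ (the transcendence degree never exceeds the rank of the jacobian matrix). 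In either case $\tau\leqslant n-2$, so the prime ideal $\mathfrak q=\{v\in k[x_1,\dots,x_n]:g\mid v(f_1,\dots,f_n)\}$, being the kernel of $x_j\mapsto\overline{f_j}$, has height at least $2$.

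\textbf{The crux.} It remains, in this last subcase, to produce an \emph{irreducible} $w$ with $g^2\mid w(f_1,\dots,f_n)$. Since $\mathfrak q^2\subseteq\{v:g^2\mid v(f_1,\dots,f_n)\}$ (a sum $\sum p_aq_a$ of products of elements of $\mathfrak q$ maps under $x_j\mapsto f_j$ to a sum each of whose terms has two factors divisible by $g$), it suffices to find an irreducible polynomial in $\mathfrak q^2$. As $\mathfrak q$ has height $\geqslant 2$ it is not principal, and after a suitable relabelling of the variables one can exhibit in $\mathfrak q^2$ an element $u_1+u_2$ with $u_1\in k[x_1,\dots,x_r,x_{r+1}]$ and $u_2\in k[x_1,\dots,x_r,x_{r+2}]$ whose degrees in $x_{r+1}$ and $x_{r+2}$ are positive and relatively prime (Lemma~\ref{l5} being available to adjust degrees and produce relations of the required shape); Lemma~\ref{l4} then factors $u_1+u_2=w_1w_2$ with $w_2$ irreducible and $w_1\in k[x_1,\dots,x_r]$, and ensuring that a top $x_{r+1}$-coefficient of $u_1+u_2$ is a nonzero constant forces $w_1$ to be constant, so that $w:=w_2\in\mathfrak q^2$ is as required. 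I expect this construction to be the main obstacle: geometrically one must produce an irreducible hypersurface singular to multiplicity $\geqslant 2$ along the image of $\{g=0\}$ under $(f_1,\dots,f_n)$ precisely when that image has codimension $\geqslant 2$, and handling it by bare hands is exactly what the combinatorial Lemmas~\ref{l4} and~\ref{l5} are for. Everything else reduces, via Lemmas~\ref{l1}--\ref{l3} and Corollary~\ref{c2}, to the chain rule and linear algebra over $A_0$.
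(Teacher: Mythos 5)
Your $(ii)\Rightarrow(i)$ direction is correct and takes a genuinely different route from the paper's: the paper proves this implication by induction on the number of polynomials $m$, via $\dgcd(f_1,\dots,f_m)$ and a Laplace-expansion step, whereas you argue directly --- either $g\nmid\frac{\partial w}{\partial x_i}(f_1,\dots,f_n)$ for some $i$ and Lemma~\ref{l1}.a applies at once, or $g$ divides all of them, in which case Lemma~\ref{l5} and Lemma~\ref{l2} force $\trdeg_k k(\overline{f_1},\dots,\overline{f_n})\leqslant n-2$, and a nonzero $k[\overline{f_1},\dots,\overline{f_n}]$-derivation of $A$ (Corollary~\ref{c2} plus the lifting used in Lemma~\ref{l1}.b) makes the jacobian matrix singular over $A_0$. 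That is shorter than the paper's induction and I see no error in it. The first half of your $(i)\Rightarrow(ii)$ is also sound: the dichotomy on whether $g\mid d_i(g)$, the direct conclusion when $g\nmid d_i(g)$ for some $i$, and the reduction to $\trdeg_k k(\overline{f_1},\dots,\overline{f_n})\leqslant n-2$ otherwise (your parenthetical ``transcendence degree never exceeds the rank of the jacobian matrix over $A_0$'' does need a proof, but it is a rectangular version of Lemma~\ref{l1} and is provable by the same argument).

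The genuine gap is exactly where you flagged it: producing an irreducible $w$ with $g^2\mid w(f_1,\dots,f_n)$ once $\trdeg\leqslant n-2$. Your sketch leaves both essential points unproved. First, you never actually construct elements $u_1\in k[x_1,\dots,x_r,x_{r+1}]$ and $u_2\in k[x_1,\dots,x_r,x_{r+2}]$ of $\mathfrak q^2$ whose degrees in $x_{r+1}$ and $x_{r+2}$ are positive and coprime; Lemma~\ref{l5} is of no help here (it is a B\'ezout statement about $w$ and $\frac{\partial w}{\partial x_i}$, not a degree-adjusting device). The missing trick is: relabel so that $\overline{f_1},\dots,\overline{f_r}$ (with $r=\trdeg\leqslant n-2$) form a transcendence basis, take $v_1\in\mathfrak q\cap k[x_1,\dots,x_r,x_{r+1}]$ of degree $t_1>0$ in $x_{r+1}$ and $v_2\in\mathfrak q\cap k[x_1,\dots,x_r,x_{r+2}]$ of degree $t_2>0$ in $x_{r+2}$ (Lemma~\ref{l2}.b), and put $u_1=v_1^2x_{r+1}^{2t_2+1}$, $u_2=v_2^2x_{r+2}^{2t_1}$, whose degrees $2t_1+2t_2+1$ and $2t_1+2t_2$ are consecutive, hence coprime. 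Second, your plan to force the factor $w_1$ in $u_1+u_2=w_1w_2$ to be constant by normalizing a top $x_{r+1}$-coefficient is neither achievable in general (that coefficient is the square of the leading coefficient of $v_1$, an arbitrary element of $k[x_1,\dots,x_r]$) nor needed: Lemma~\ref{l4} only gives $w_1\in k[x_1,\dots,x_r]\setminus\{0\}$, and the correct finish is that $g\nmid w_1(f_1,\dots,f_r)$ because $\overline{f_1},\dots,\overline{f_r}$ are algebraically independent (Lemma~\ref{l2}.a), so $g^2\mid w_1(f)w_2(f)$ already forces $g^2\mid w_2(f)$ --- there is no need for $w_2$ itself to lie in $\mathfrak q^2$. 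Without these two steps the implication $(i)\Rightarrow(ii)$ is not established.
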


\begin{proof}
$(i)\Rightarrow (ii)$ \
Assume that $g\mid\jac(f_1,\dots,f_n)$.

\medskip

By Lemma~\ref{l3}, $g\mid w(f_1,\dots,f_n)$
for some irreducible polynomial $w\in k[x_1,\dots,x_n]$,
so $$w(f_1,\dots,f_n)=gh\leqno (1)$$
for some $h\in k[x_1,\dots,x_n]$.
Without loss of generality, we may assume
that $w$ is of positive degree with respect to $x_n$,
so $\overline{f_n}$ is algebraic over the field
$k(\,\overline{f_1},\dots,\overline{f_{n-1}}\,)$,
by Lemma~\ref{l2}.b.
Assume that $g^2\nmid w(f_1,\dots,f_n)$, that is, $g\nmid h$.

\medskip

Consider the $k$-derivation $d_n$ of $k[x_1,\dots,x_n]$ defined by
$$d_n(f)=\jac(f_1,\dots,f_{n-1},f)$$ for $f\in k[x_1,\dots,x_n]$.
Observe that $d_n(f_i)=0$ for $i=1,\dots,n-1$
and $d_n(f_n)=\jac(f_1,\dots,f_n)$.
Applying the derivation $d_n$ to both sides of $(1)$ we obtain
$$\textstyle
\frac{\partial w}{\partial x_n}(f_1,\dots,f_n)d_n(f_n)=
d_n(g)h+gd_n(h).$$
Since $g\mid d_n(f_n)$ and $g\nmid h$,
we have $g\mid d_n(g)$, that is, $g\mid\jac(f_1,\dots,f_{n-1},g)$.

\medskip

From Lemma \ref{l1} we obtain that there exist polynomials
$s_1,\dots,s_n\in$ $k[x_1,\dots,x_n]$,
where $g\nmid s_i$ for some $i\in\{1,\dots,n\}$, such that
$$g\mid s_1d(f_1)+\ldots+s_{n-1}d(f_{n-1})+s_nd(g)$$
for every $k$-derivation $d$ of $k[x_1,\dots,x_n]$.
Note that the polynomials $s_1,\dots,$ $s_{n-1}$
can not all together be divisible by $g$.
Indeed, in this case we would have $g\nmid s_n$ and $g\mid s_nd(g)$,
so $g\mid d(g)$ for every $k$-derivation $d$,
what is not true for $d=\frac{\partial}{\partial x_j}$
such that $\frac{\partial g}{\partial x_j}\neq 0$.

\medskip

Thus $g\nmid s_i$ for some $i\in\{1,\dots,n-1\}$;
we may assume that $g\nmid s_{n-1}$.
By Lemma \ref{l1}, $\overline{f_{n-1}}$ is algebraic over the field
$k(\,\overline{f_1},\dots,\overline{f_{n-2}},\overline{g}\,)=
k(\,\overline{f_1},\dots,$ $\overline{f_{n-2}}\,)$.
Recall that $\overline{f_n}$ is algebraic over
$k(\,\overline{f_1},\dots,\overline{f_{n-1}}\,)$,
so if we denote
$r=\trdeg_kk(\,\overline{f_1},\dots,\overline{f_n}\,)$,
we have $r\leqslant n-2$.
Hence, we may assume that
$\overline{f_1},\dots,\overline{f_r}$
are algebraically independent over $k$.

\medskip

Let $L=k(\,\overline{f_1},\dots,\overline{f_r}\,)$.
Since $\overline{f_{r+1}}$ and $\overline{f_{r+2}}$
are algebraic over $L$,
there exist nonzero polynomials
$v_1\in k[x_1,\dots,x_r,x_{r+1}]$,
$v_2\in k[x_1,\dots,x_r,x_{r+2}]$
of positive degrees $t_1$, $t_2$
with respect to $x_{r+1}$, $x_{r+2}$, respectively,
such that the polynomials
$v_1(f_1,\dots,f_r,f_{r+1})$ and $v_2(f_1,\dots,f_r,f_{r+2})$
are both divisible by $g$
(Lemma~\ref{l2}.b).
Put $u_1=v_1^2x_{r+1}^{2t_2+1}$, $u_2=v_2^2x_{r+2}^{2t_1}$.
Then the polynomials
$u_1(f_1,\dots,f_r,f_{r+1})$ and $u_2(f_1,\dots,f_r,f_{r+2})$
are both divisible by $g^2$ and, by Lemma \ref{l4},
there exist nonzero polynomials $w_1\in k[x_1,\dots,x_r]$,
$w_2\in k[x_1,\dots,x_r,x_{r+1},x_{r+2}]$ such that
$w_2$ is irreducible and $u_1+u_2=w_1w_2$.
We obtain that
$$g^2\mid w_1(f_1,\dots,f_r)w_2(f_1,\dots,f_r,f_{r+1},f_{r+2}),$$
but $g\nmid w_1(f_1,\dots,f_r)$ by Lemma~\ref{l2}.a, 
because $\overline{f_1},\dots,\overline{f_r}$
are algebraically independent over $k$.
Finally, $g^2\mid w_2(f_1,\dots,f_r,f_{r+1},f_{r+2})$.

\medskip

$(ii)\Rightarrow (i)$ \
We will show by induction on $m\in\{1,\dots,n\}$ that
for $m$ arbitrary polynomials $f_1,\dots,f_m\in k[x_1,\dots,x_n]$
and an irreducible polynomial $g\in k[x_1,\dots,x_n]$,
if $g^2\mid w(f_1,\dots,f_m)$
for some irreducible polynomial $w\in k[x_1,\dots,x_m]$,
then $g\mid \dgcd(f_1,\dots,f_m)$.
Recall that $\dgcd(f_1,\dots,f_m)=
\gcd\left(\jac^{f_1,\dots,f_m}_{j_1,\dots,j_m};\;
1\leqslant j_1,\dots,j_m\leqslant n \right)$.

\medskip

Let $m=1$.
Assume that $g^2\mid w(f_1)$,
where $w\in k[x_1]$ is an irreducible polynomial,
so $w(f_1)=g^2h$ for some $h\in k[x_1,\dots,x_n]$.
Applying the partial derivative with respect to $x_i$
for $i\in\{1,\dots,n\}$ we obtain
$w'(f_1)\frac{\partial f_1}{\partial x_i}=
2g\frac{\partial g}{\partial x_i}h+
g^2\frac{\partial h}{\partial x_i}$,
so $g\mid w'(f_1)\frac{\partial f_1}{\partial x_i}$.
Since $w$ is irreducible,
$w'$ is relatively prime to $w$,
so $uw+vw'=1$ for some polynomials $u,v\in k[x_1]$.
This yields $u(f_1)w(f_1)+v(f_1)w'(f_1)=1$,
so $g\nmid w'(f_1)$.
Therefore $g\mid \frac{\partial f_1}{\partial x_i}$
for each $i$, so $g\mid\dgcd(f_1)$.

\medskip

Now, let $m\in\{2,\dots,n\}$.
Assume that the induction hypothesis holds for $m-1$.
Assume that $g^2\mid w(f_1,\dots,f_m)$
for some irreducible polynomial $w\in k[x_1,\dots,x_m]$:
$$w(f_1,\dots,f_m)=g^2h, \leqno (2)$$
where $h\in k[x_1,\dots,x_n]$.

\medskip

First, consider the case when
$g^2\mid u(f_1,\dots,f_{i-1},f_{i+1},\dots,f_m)$
for some $i\in\{1,\dots,m\}$
and some irreducible polynomial
$u\in k[x_1,\dots,x_{i-1},x_{i+1},\dots,x_m]$.
In this case, by the induction hypothesis,
$g\mid\dgcd(f_1,\dots,f_{i-1},f_{i+1},\dots,f_m)$,
so every jacobian determinant of
$f_1,\dots,f_{i-1},f_{i+1},\dots,f_m$ is divisible by $g$.
Then, for arbitrary $j_1,\dots,j_m\in\{1,\dots,n\}$,
from the Laplace expansion with respect to $i$-th row,
we see that the determinant
$\jac^{f_1,\dots,f_m}_{j_1,\dots,j_m}$ is divisible by $g$,
so $g\mid\dgcd(f_1,\dots,f_m)$.

\medskip

Now, assume that
$g^2\nmid u(f_1,\dots,f_{i-1},f_{i+1},\dots,f_m)$
for each $i\in\{1,\dots,m\}$
and every irreducible polynomial
$u\in k[x_1,\dots,x_{i-1},x_{i+1},\dots,x_m]$.
Hence, in particular,
$\frac{\partial w}{\partial x_i}\neq 0$.
Suppose that $g\nmid\dgcd(f_1,\dots,f_m)$, that is,
$g\nmid\jac^{f_1,\dots,f_m}_{j_1,\dots,j_m}$
for some $j_1,\dots,j_m\in\{1,\dots,n\}$.
Denote by $d_i$, for $i=1,\dots,m$,
the $k$-derivation of $k[x_1,\dots,x_n]$ defined by
$$d_i(f)=
\jac^{f_1,\dots,f_{i-1},f,f_{i+1},\dots,f_m}_{j_1,\dots,j_m}$$
for $f\in k[x_1,\dots,x_n]$.
Observe that $d_i(f_j)=0$ for $j\neq i$
and $d_i(f_i)=\jac^{f_1,\dots,f_m}_{j_1,\dots,j_m}$.
Applying the derivation $d_i$ to both sides of $(2)$
we have
$$\textstyle \frac{\partial w}{\partial x_i}(f_1,\dots,f_m)
\jac^{f_1,\dots,f_m}_{j_1,\dots,j_m}=
2gd_i(g)h+g^2d_i(h),$$
so $g\mid\frac{\partial w}{\partial x_i}(f_1,\dots,f_m)$.

\medskip

From Lemma \ref{l5} we obtain that
$g\mid v(f_1,\dots,f_{i-1},f_{i+1},\dots,f_m)$
for some nonzero polynomial
$v\in k[x_1,\dots,x_{i-1},x_{i+1},\dots,x_m]$.
The polynomial $v$ is obviously non-constant.
Then there exists an irreducible polynomial
$u_i\in k[x_1,\dots,x_{i-1},x_{i+1},\dots,x_m]$
such that the polynomial
$u_i(f_1,\dots,f_{i-1},f_{i+1},\dots,$ $f_m)$
is divisible by $g$, that is,
$$u_i(f_1,\dots,f_{i-1},f_{i+1},\dots,f_m)=gs_i\leqno (3)$$
for some $s_i\in k[x_1,\dots,x_n]$.
By the assumption,
the left side of $(3)$ is not divisible by $g^2$,
so $g\nmid s_i$.
Applying the derivation $d_i$ to both sides of $(3)$
we obtain $0=d_i(g)s_i+gd_i(s_i)$,
so $g\mid d_i(g)$ (for arbitrary $i\in\{1,\dots,m\}$).

\medskip

Now, consider arbitrary $j\in\{1,\dots,m\}$
and apply the derivation $d_j$ to both sides of $(3)$
for $i\in\{1,\dots,m\}$, $i\neq j$:
$$\textstyle \frac{\partial u_i}{\partial x_j}
(f_1,\dots,f_{i-1},f_{i+1},\dots,f_m)
\jac^{f_1,\dots,f_m}_{j_1,\dots,j_m}=
d_j(g)s_i+gd_j(s_i).$$
Since $g\mid d_j(g)$
and $g\nmid\jac^{f_1,\dots,f_m}_{j_1,\dots,j_m}$,
we have
$g\mid\frac{\partial u_i}{\partial x_j}
(f_1,\dots,f_{i-1},f_{i+1},\dots,f_m)$.
On the other hand,
applying the derivation $\frac{\partial}{\partial x_j}$
to both sides of $(3)$ we obtain
$$\begin{array}{l}
\textstyle
\frac{\partial u_i}{\partial x_1}
(f_1,\dots,f_{i-1},f_{i+1},\dots,f_m)
\frac{\partial f_1}{\partial x_j}+\ldots+
\frac{\partial u_i}{\partial x_m}
(f_1,\dots,f_{i-1},f_{i+1},\dots,f_m)
\frac{\partial f_m}{\partial x_j}\\
\textstyle \;\;\;\;=\;\;
\frac{\partial g}{\partial x_j}s_i+
g\frac{\partial s_i}{\partial x_j}.
\end{array}$$
Recall that $g\nmid s_i$,
so $g\mid\frac{\partial g}{\partial x_j}$,
that is, $\frac{\partial g}{\partial x_j}=0$
for each $j\in\{1,\dots,m\}$,
a contradiction.
\end{proof}

Note the following immediate consequence of Theorem~\ref{t2}.

\begin{cor}
\label{c1}
For arbitrary polynomials $f_1,\dots,f_n\in k[x_1,\dots,x_n]$
the following conditions are equivalent:

\smallskip

\noindent
$(i)$ \
$\jac(f_1,\dots,f_n)\in k\setminus\{0\}$,

\smallskip

\noindent
$(ii)$ \
for every irreducible polynomial $w\in k[x_1,\dots,x_n]$
the polynomial $w(f_1,$ $\dots,f_n)$ is square-free.
\end{cor}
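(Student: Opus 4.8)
The plan is to derive the corollary directly from Theorem~\ref{t2}, which says that for an irreducible $g\in k[x_1,\dots,x_n]$ one has $g\mid\jac(f_1,\dots,f_n)$ if and only if $g^2\mid w(f_1,\dots,f_n)$ for some irreducible $w\in k[x_1,\dots,x_n]$. The only extra ingredient is the elementary remark that a polynomial $p\in k[x_1,\dots,x_n]$ fails to be square-free exactly when $g^2\mid p$ for some irreducible $g$; this also covers the degenerate case $p=0$, where every square divides $p$. I will prove both implications in contrapositive form.

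Suppose first that $(ii)$ fails, so $w(f_1,\dots,f_n)$ is not square-free for some irreducible $w$. By the remark there is an irreducible $g$ with $g^2\mid w(f_1,\dots,f_n)$, and then $g\mid\jac(f_1,\dots,f_n)$ by Theorem~\ref{t2}. Since $g$ is not a unit, $\jac(f_1,\dots,f_n)$ cannot be a nonzero constant, so $(i)$ fails. Conversely, suppose $(i)$ fails, so $\jac(f_1,\dots,f_n)$ is either $0$ or non-constant. In both cases some irreducible $g\in k[x_1,\dots,x_n]$ divides $\jac(f_1,\dots,f_n)$ --- any irreducible $g$ when the jacobian vanishes, and an irreducible factor otherwise, as $k[x_1,\dots,x_n]$ is a unique factorization domain. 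Theorem~\ref{t2} then produces an irreducible $w\in k[x_1,\dots,x_n]$ with $g^2\mid w(f_1,\dots,f_n)$, so $w(f_1,\dots,f_n)$ is not square-free and $(ii)$ fails. Combining the two contrapositives yields the equivalence.

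I do not anticipate any genuine obstacle, since all of the mathematical content is carried by Theorem~\ref{t2}; what remains is only careful bookkeeping with the definition of square-freeness, namely reading ``not square-free'' as ``divisible by the square of an irreducible'' so that the zero polynomial is handled uniformly, and using that a nonzero constant is a unit and hence has no irreducible divisor. If one prefers to keep the zero polynomial out of the discussion, one may note instead that in the direction $(i)\Rightarrow(ii)$ the hypothesis $\jac(f_1,\dots,f_n)\in k\setminus\{0\}$ already forces $f_1,\dots,f_n$ to be algebraically independent over $k$, whence $w(f_1,\dots,f_n)\neq0$ for every nonzero $w$.
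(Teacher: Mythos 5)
Your proof is correct and follows exactly the route the paper intends: the paper states Corollary~\ref{c1} as an immediate consequence of Theorem~\ref{t2}, and your argument simply spells out the two contrapositives, with careful (and appropriate) attention to the degenerate cases of a vanishing jacobian and the zero polynomial. Nothing is missing.
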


\section{An equivalent formulation of the Jacobian Conjecture}

\label{formulation}

If $\varphi$ is a $k$-endomorphism of $k[x_1,\dots,x_n]$,
then by $\jac \varphi$ we denote
the jacobian determinant of the polynomials
$\varphi(x_1)$, $\dots$, $\varphi(x_n)$
with respect to $x_1$, $\dots$, $x_n$:
$$\jac \varphi=\jac(\varphi(x_1),\dots,\varphi(x_n)).$$
We obtain the following characterization
of $k$-endomorphisms satisfying the jacobian condition.

\begin{tw}
\label{t3}
Let $k$ be a field of characteristic zero.
Let $\varphi$ be a $k$-en\-do\-morph\-ism
of the polynomial algebra $k[x_1,\dots,x_n]$.
The following conditions are equivalent:

\smallskip

\noindent
$(i)$ \
$\jac \varphi\in k\setminus\{0\}$,

\smallskip

\noindent
$(ii)$ \
for every irreducible polynomial $w\in k[x_1,\dots,x_n]$
the polynomial $\varphi(w)$ is square-free.
\end{tw}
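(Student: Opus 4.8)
The plan is to reduce Theorem~\ref{t3} directly to Corollary~\ref{c1}. Writing $f_i=\varphi(x_i)$ for $i=1,\dots,n$, the universal property of the polynomial algebra gives $\varphi(w)=w(f_1,\dots,f_n)$ for every $w\in k[x_1,\dots,x_n]$; in particular $\jac\varphi=\jac(\varphi(x_1),\dots,\varphi(x_n))=\jac(f_1,\dots,f_n)$, by the very definition of $\jac\varphi$ recalled above. So condition $(i)$ of the theorem is literally condition $(i)$ of Corollary~\ref{c1}, and condition $(ii)$ of the theorem --- that $\varphi(w)$ is square-free for each irreducible $w$ --- is literally condition $(ii)$ of Corollary~\ref{c1}. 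The equivalence is then immediate.

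Thus there is essentially nothing to prove beyond Corollary~\ref{c1}: the only thing to verify is the dictionary $\varphi\longleftrightarrow(f_1,\dots,f_n)$ between $k$-endomorphisms of $k[x_1,\dots,x_n]$ and $n$-tuples of polynomials, under which $\varphi(w)$ corresponds to $w(f_1,\dots,f_n)$. Every $k$-endomorphism arises in this way, by choosing the images of the generators, and conversely every tuple $(f_1,\dots,f_n)$ determines a unique such $\varphi$; so the two formulations carry exactly the same content, and one simply substitutes.

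Consequently there is no genuine obstacle at this stage: the substance has already been absorbed into Theorem~\ref{t2} (the divisibility criterion relating irreducible factors of $\jac(f_1,\dots,f_n)$ to squares dividing $w(f_1,\dots,f_n)$) and its Corollary~\ref{c1}. If one wanted a self-contained write-up, it would amount to unwinding both implications of Corollary~\ref{c1} after the substitution $f_i=\varphi(x_i)$, which introduces no new ideas.
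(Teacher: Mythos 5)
Your reduction is exactly the paper's own proof: the author likewise sets $f_i=\varphi(x_i)$, observes that $\varphi(w)=w(f_1,\dots,f_n)$ for every $w$, and concludes by Corollary~\ref{c1}. The argument is correct and there is nothing further to add.
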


\begin{proof}
Put $f_1=\varphi(x_1)$, $\dots$, $f_n=\varphi(x_n)$.
Since $\varphi$ is a $k$-endomorphism,
for every polynomial $w\in k[x_1,\dots,x_n]$ we have
$$\varphi(w(x_1,\dots,x_n))=
w(\varphi(x_1),\dots,\varphi(x_n))=
w(f_1,\dots,f_n).$$
The rest follows from Corollary \ref{c1}.
\end{proof}

The following theorem was obtained
by Bakalarski in \cite{Bakalarski} (Theorem 3.7)
under an additional assumption,
but it was noted in a remark added in the proof
that this assumption is not necessary.
Here we present another proof of Bakalarski's theorem,
based on our Lemma~\ref{l3}.

\begin{tw}[Bakalarski]
\label{t4}
Let $k$ be a field of characteristic zero
and let $\varphi$ be a $k$-endomorphism
of the polynomial algebra $k[x_1,\dots,x_n]$.
The following conditions are equivalent:

\smallskip

\noindent
$(i)$ \
$\varphi$ is a $k$-automorphism of $k[x_1,\dots,x_n]$,

\smallskip

\noindent
$(ii)$ \
for every irreducible polynomial $w\in k[x_1,\dots,x_n]$
the polynomial $\varphi(w)$ is irreducible.
\end{tw}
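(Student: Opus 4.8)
The plan is to prove the two implications separately: $(i)\Rightarrow(ii)$ is purely formal, while $(ii)\Rightarrow(i)$ is the substance and is where Lemma~\ref{l3} comes in. For $(i)\Rightarrow(ii)$ I would use that a $k$-automorphism preserves units and irreducibility: if $w$ is irreducible and $\varphi(w)=pq$, applying $\varphi^{-1}$ gives $w=\varphi^{-1}(p)\varphi^{-1}(q)$, so one of $\varphi^{-1}(p),\varphi^{-1}(q)$ is a unit and hence so is the corresponding $p$ or $q$; moreover $\varphi(w)$ is not a unit since $\varphi^{-1}(\varphi(w))=w$ is not. Thus $\varphi(w)$ is irreducible.

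For $(ii)\Rightarrow(i)$ I would first prove injectivity and then surjectivity. For injectivity, suppose $\varphi(p)=0$ with $p\neq 0$; since $\varphi$ fixes constants, $p$ is non-constant, so it admits an irreducible factorization, and because $k[x_1,\dots,x_n]$ is a domain some irreducible factor $p_0$ of $p$ must satisfy $\varphi(p_0)=0$. But $0$ is not irreducible, contradicting $(ii)$; hence $\ker\varphi=0$. For surjectivity --- the heart of the matter --- put $f_i=\varphi(x_i)$, so that $\varphi(w)=w(f_1,\dots,f_n)$ for every $w\in k[x_1,\dots,x_n]$. Given an arbitrary irreducible polynomial $g\in k[x_1,\dots,x_n]$, Lemma~\ref{l3} produces an irreducible $w$ with $g\mid\varphi(w)$; by $(ii)$ the polynomial $\varphi(w)$ is itself irreducible, so $g$ and $\varphi(w)$ are associates in the UFD $k[x_1,\dots,x_n]$, i.e.\ $\varphi(w)=\lambda g$ for some $\lambda\in k\setminus\{0\}$, whence $g=\varphi(\lambda^{-1}w)\in\operatorname{Im}\varphi$. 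Since every polynomial is a product of a nonzero constant and finitely many irreducibles, all of which lie in the subring $\operatorname{Im}\varphi$, the endomorphism $\varphi$ is surjective. Being both injective and surjective, $\varphi$ is a $k$-automorphism.

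The step I expect to be the main obstacle is surjectivity, and more precisely the following combination: Lemma~\ref{l3} only asserts that $g$ divides $\varphi(w)$ for \emph{some} irreducible $w$, and the key observation is that hypothesis $(ii)$ promotes this divisibility to an equality up to a scalar, which is exactly what is needed to conclude $g\in\operatorname{Im}\varphi$. Everything else is routine bookkeeping in the UFD $k[x_1,\dots,x_n]$, and injectivity is cheap; so the whole argument hinges on already having Lemma~\ref{l3} at hand.
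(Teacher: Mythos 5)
Your proof is correct and follows essentially the same route as the paper: injectivity via an irreducible factor of a kernel element, and surjectivity by combining Lemma~\ref{l3} with hypothesis $(ii)$ to show every irreducible polynomial is an associate of some $\varphi(w)$ (the paper phrases this last step as a contradiction, but the argument is the same).
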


\begin{proof}
Every automorphism of a ring maps irreducible elements
into irreducible elements,
so it is enough to prove the implication $(ii)\Rightarrow (i)$.
Assume that $\varphi(w)$ is an irreducible polynomial
for every irreducible $w\in k[x_1,\dots,x_n]$.
Observe that $\varphi$ is a monomorphism:
if $\varphi(f)=0$ and $f=g_1\ldots g_r$
is a decomposition into irreducible factors,
then $\varphi(g_i)=0$ for some $i$, contrary to the assumption.

\medskip

Now we will prove that $\varphi$ is surjective.
Put $f_i=\varphi(x_i)$ for $i=1,\dots,n$.
Suppose that there exists a polynomial $g\in k[x_1,\dots,x_n]$,
such that $g\not\in k[f_1,\dots,f_n]$.
In this case at least one of irreducible factors of $g$
does not belong to $k[f_1,\dots,f_n]$,
so we may assume that $g$ is irreducible.
Then, by Lemma \ref{l3}, $g\mid w(f_1,\dots,f_n)$
for some irreducible polynomial $w\in k[x_1,\dots,x_n]$,
that is, $w(f_1,\dots,f_n)=gh$, where $h\in k[x_1,\dots,x_n]$.
However, $h\not\in k$, because $g\not\in k[f_1,\dots,f_n]$,
so $w(f_1,\dots,f_n)$ is a reducible polynomial.
\end{proof}

By Theorems \ref{t3} and \ref{t4} we have.

\begin{tw}
\label{t5}
Let $k$ be a field of characteristic zero,
let $n$ be a positive integer.
The following conditions are equivalent:

\smallskip

\noindent
$(i)$ \
every $k$-endomorphism $\varphi$ of $k[x_1,\dots,x_n]$
such that $\jac \varphi\in k\setminus\{0\}$
is an automorphism of $k[x_1,\dots,x_n]$
(the Jacobian Conjecture),

\smallskip

\noindent
$(ii)$ \
every $k$-endomorphism of $k[x_1,\dots,x_n]$
mapping irreducible polynomials to square-free polynomials
maps irreducible polynomials to irreducible polynomials.
\end{tw}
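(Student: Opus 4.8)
The plan is to derive Theorem~\ref{t5} as a purely formal consequence of Theorems~\ref{t3} and~\ref{t4}, with no further work on jacobians needed. The key observation is that both the hypothesis and the conclusion appearing in the Jacobian Conjecture admit reformulations in terms of the behaviour of $\varphi$ on irreducible polynomials: by Theorem~\ref{t3} the condition $\jac\varphi\in k\setminus\{0\}$ is equivalent to ``$\varphi$ maps irreducible polynomials to square-free polynomials,'' and by Theorem~\ref{t4} the condition ``$\varphi$ is an automorphism'' is equivalent to ``$\varphi$ maps irreducible polynomials to irreducible polynomials.'' Once these two substitutions are made, statements $(i)$ and $(ii)$ become literally the same implication, quantified over all $k$-endomorphisms of $k[x_1,\dots,x_n]$.

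To make this precise I would argue both implications directly. For $(i)\Rightarrow(ii)$, assume the Jacobian Conjecture holds and let $\varphi$ map irreducible polynomials to square-free ones; Theorem~\ref{t3} gives $\jac\varphi\in k\setminus\{0\}$, hence $\varphi$ is an automorphism by $(i)$, and automorphisms carry irreducible elements to irreducible elements (the trivial direction of Theorem~\ref{t4}), so $\varphi$ maps irreducible polynomials to irreducible polynomials. For $(ii)\Rightarrow(i)$, let $\varphi$ satisfy $\jac\varphi\in k\setminus\{0\}$; Theorem~\ref{t3} shows $\varphi$ maps irreducible polynomials to square-free ones, $(ii)$ then upgrades this to mapping irreducible polynomials to irreducible polynomials, and Theorem~\ref{t4} concludes that $\varphi$ is an automorphism.

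Since every step is just an invocation of an already established equivalence, there is no genuine obstacle here; all the difficulty has been absorbed into the proof of Theorem~\ref{t2} (and hence Corollary~\ref{c1} and Theorem~\ref{t3}) and into Bakalarski's Theorem~\ref{t4}. The only points worth double-checking are that Theorem~\ref{t3} is genuinely a two-sided equivalence, so that the reformulation of the hypothesis of the conjecture is reversible, and that Theorem~\ref{t4} is stated over the same field $k$ of characteristic zero and the same number $n$ of variables, so that the ranges of the quantifiers on the two sides match exactly.
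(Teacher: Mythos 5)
Your proposal is correct and matches the paper exactly: the paper derives Theorem~\ref{t5} as an immediate consequence of Theorems~\ref{t3} and~\ref{t4}, offering no further argument, and your write-up simply makes that routine substitution explicit in both directions.
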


\section{Final remarks}

\label{remarks}

\noindent
{\bf Remark 1.}
From Theorem \ref{t3} we know that
a $k$-endomorphism $\varphi$ of $k[x_1,\dots,x_n]$
satisfies the jacobian condition if and only if
it maps irreducible polynomials to square-free polynomials.
It is natural to ask if there exists
a non-trivial example of such a $k$-endomorphism;
non-trivial in the following sense: 
for some irreducible polynomial $w\in k[x_1,\dots,x_n]$ 
the polynomial $\varphi(w)$ is reducible.
From Theorem \ref{t5} we know that such an example
would be a counter-example to the Jacobian Conjecture,
and if such an example does not exist,
the Jacobian Conjecture is true.

\bigskip

\noindent
{\bf Remark 2.}
It may be interesting to consider the following property
of a given ring (a~commutative ring with unity):
$$\begin{array}{l}
\mbox{\em every endomorphism mapping irreducible elements
to square-free}\\
\mbox{\em elements maps irreducible elements to irreducible
elements.}
\end{array}\leqno (\ast)$$

\begin{pyt}
\label{q1}
Let $R$ be a unique factorization domain
satisfying the condition~$(\ast)$.
Does the ring $R[x]$
of polynomials in one variable over $R$
also satisfy the condition~$(\ast)$?
\end{pyt}

If the answer to this question is positive,
then the Jacobian Conjecture is true.
Namely, in this case, by an obvious induction,
every ring endomorphism of $k[x_1,\dots,x_n]$
mapping irreducible polynomials to square-free polynomials
maps irreducible polynomials to irreducible polynomials.
And then, in particular,
every $k$-endomorphism of $k[x_1,\dots,x_n]$
mapping irreducible polynomials to square-free polynomials
maps irreducible polynomials to irreducible polynomials.

\bigskip

\noindent
{\bf Remark 3.}
Theorem \ref{t2} is a multi-dimensional version
of the following lemma of Freudenburg (\cite{Freudenburg}):
if an irreducible polynomial $g\in\mathbb{C}[x,y]$
divides both partial derivatives
$\frac{\partial f}{\partial x}$, $\frac{\partial f}{\partial y}$
of a given polynomial $f\in\mathbb{C}[x,y]$,
then $g$ divides $f+c$ for some $c\in\mathbb{C}$.
Van den Essen, Nowicki and Tyc (\cite{ENT})
generalized this lemma for $n$ variables
over an algebraically closed field of characteristic $0$.
In \cite{charoneel} the author obtained the following
generalization for an arbitrary field $k$ of characteristic $0$
(not necessarily algebraically closed):
an irreducible polynomial $g\in k[x_1,\dots,x_n]$
divides all partial derivatives
$\frac{\partial f}{\partial x_1}$, $\dots$,
$\frac{\partial f}{\partial x_n}$
of a given polynomial $f\in k[x_1,\dots,x_n]$
if and only if $g^2$ divides $W(f)$
for some irreducible polynomial $W(T)\in k[T]$.

\medskip

Let us take a closer look at a very specific analogy between
the cases of a single polynomial and of $n$ polynomials.
In fact, comparing the proofs, we may argue
that there is no real analogy here.
The only crucial implication
for a single polynomial $f\in k[x_1,\dots,x_n]$
is the following:
if an irreducible polynomial $g\in k[x_1,\dots,x_n]$
divides $\frac{\partial f}{\partial x_1}$, $\dots$,
$\frac{\partial f}{\partial x_n}$,
then $g$ divides $W(f)$
for some irreducible polynomial $W(T)\in k[T]$.
For $n$ polynomials $f_1,\dots,f_n\in k[x_1,\dots,x_n]$
and an irreducible polynomial $g\in k[x_1,\dots,x_n]$,
without any assumptions, there always exists
an irreducible polynomial $w\in k[x_1,\dots,x_n]$
such that $g$ divides $w(f_1,\dots,f_n)$.
We have established this fact in Lemma~\ref{l3}.

\medskip

Now, for a single polynomial $f\in k[x_1,\dots,x_n]$
it is easy to show that
if an irreducible polynomial $g\in k[x_1,\dots,x_n]$
divides $\frac{\partial f}{\partial x_1}$, $\dots$,
$\frac{\partial f}{\partial x_n}$ and $W(f)$,
where $W(T)\in k[T]$ is an irreducible polynomial,
then $g^2$ divides $W(f)$.
The analog of this fact for $n$ polynomials is,
in general, not true, as the following example shows.

\begin{ex}[Gwo\'{z}dziewicz, Jelonek]
\label{e1}
Consider the following polynomials in $k[x,y]$:
$f_1=x$, $f_2=xy$, $g=x$ and $w=x$.
Then $\jac(f_1,f_2)=x$ and $w(f_1,f_2)=x$
are divisible by $g$,
but $w(f_1,f_2)$ is not divisible by $g^2$.
\end{ex}

However, we still can prove that if $g$
divides the jacobian of $f_1$, $\dots$, $f_n$,
then there exists
an irreducible polynomial $w\in k[x_1,\dots,x_n]$
such that $g^2$ divides $w(f_1,\dots,f_n)$.
Finally, the reverse implication
is also not easy to be proved,
in contrast to the case of a single polynomial.
It is an easy exercise to show for
$f,g\in k[x_1,\dots,x_n]$, where $g$ is irreducible,
that if $g^2$ divides $W(f)$
for some irreducible polynomial $W(T)\in k[T]$,
then $g$ divides $\frac{\partial f}{\partial x_i}$
for $i=1,\dots,n$.

\bigskip

{\bf Acknowledgements.}
The author would like to thank 
Dr.\ Janusz Gwo\'{z}\-dzie\-wicz 
and Prof.\ Zbigniew Jelonek for Example~\ref{e1}.
The author would also like to thank 
Prof.\ Ludwik M.\ Dru\.{z}kowski
for the correction of Lemma~\ref{l5} 
from the first version of this paper.

\end{document}